\newtheorem{theorem}{Theorem}
\newtheorem{corollary}{Corollary}
\newtheorem{proposition}{Proposition}
\newtheorem{problem} {Problem}
\newcommand {\bC} {\mathbb {C}}
\newcommand {\supp} {\mathrm sup~}
\newcommand {\HH} {\mathcal H}
\newcommand{\PP}{\mathbb P}
\newcommand{\w}{\widetilde w}
\begin{document}
             \numberwithin{equation}{section}

             \title[On the Waring problem for polynomial rings]
             {On the Waring problem for polynomial rings}

\author[R.~Fr\"oberg]{Ralf Fr\"oberg}
\address{Department of Mathematics, Stockholm University, SE-106 91, Stockholm,
            Sweden}
\email{ralff@math.su.se}

\author[G.~Ottaviani] {Giorgio Ottaviani}
\address{Universit\'a degli Studi di Firenze, Dipartimento di Matematica "U. Dini", Viale Morgagni, 67a, I Ð 50134 Firenze}Ê
\email{ottavian@math.unifi.it}

\author[B.~Shapiro]{Boris Shapiro}
\address{Department of Mathematics, Stockholm University, SE-106 91, Stockholm,
            Sweden}
\email{shapiro@math.su.se}

\date{\today}
\keywords{Waring problem, sum of squares, sum of powers, Veronese embedding}
\subjclass[2010]{14C20, 11P05, 13F20, 15A21}

\begin{abstract}
In this note we discuss an  analog of the classical Waring  problem for $\bC[x_0, x_1,...,x_n]$. Namely, we show that a general homogeneous polynomial $p\in \bC[x_0,x_1,...,x_n]$ of degree divisible by $k\ge 2$ can  be  represented  as a sum of at most $k^{n}$ $k$-th powers of homogeneous polynomials in $\bC[x_0, x_1,...,x_n]$. Noticeably, $k^{n}$   coincides with the number obtained by naive dimension count.
\end{abstract}

\maketitle

\section{Introduction}
\label{sec:int}
 We shall study  a version of the  general  Waring problem for rings as posed  in, e.g.,  \cite{GV}. Namely,   we shall be concerned  with 

\begin{problem} 
For any ring $A$ and any integer $k>1$, let $A_k \subset A$ be the set of all sums of $k$th powers in $A$. For
any $a \in A_k$, let $w_k(a, A)$ be the least $s$ such that $a$ is a sum of $s$ $k$-th powers. Determine $w_k(A)=\supp_{a\in A_k}w_k(a)$.  (It is possible that  $w_k(A)=\infty$). 
\end{problem} 

In many rings it makes sense to talk about generic elements in $A_k$ and, similarly, one can ask to determine the number 
$\w_k(A)=\supp_{a\in \widetilde A_k}w_k(a),$ 
where $ \widetilde A_k$ is the appropriate set of generic elements in $A_k$. We will refer to the latter question as the {\em weak Waring problem} as opposed to Problem 1 which we call the {\em strong Waring problem}. 

Below we concentrate on $A=\bC[x_0, x_1,...,x_n]$ and  for convenience work with homogeneous polynomials usually referred to as forms. In this case it is  known that $A_k$ coincides with the space of all forms in $\bC[x_0, x_1,...,x_n]$ whose degree is divisible by $k$.  
Thus, the strong Waring problem for  $\bC[x_0, x_1,...,x_n]$ is formulated as  follows. Denote by $S^d_n$  the linear space of all forms of degree $d$ in $n+1$ variables (with the $0$-form included).

\begin{problem}\label{prob:1} Find the supremum  
over the set of all forms  $f\in S^{kd}_n$  
of the minimal number of forms of  degree  $d$  needed to represent $f$ as a sum of their $k$-th powers.  
In particular, how many forms of degree  $d$ is required to represent an arbitrary form $f\in S^{2d}_n$  as a sum of their squares?  
\end{problem}

Recall that  $\dim S^d_n=\binom {d+n}{n}$ and simple  calculations show that  
$$\frac{\dim S^{kd}_n}{\dim S_{n}^d}< k^{n}\quad\text{and}\quad 
{\lim_{d\to\infty}\frac{\dim S_{n}^{kd}}{\dim S_{n}^d}=k^{n}.}$$ 
Therefore,  $k^{n}$ is the lower bound for the answer to Problem~\ref{prob:1}. A version of Problem~\ref{prob:1} related to the weak Waring problem is as follows. 

\begin{problem}\label{prob:generic} Find the minimum over all Zariski open subsets  in $S^{kd}_n$ 
of the  number of forms  of  degree  $d$  needed to represent forms from these subsets as a sum of their $k$-th powers.
In other words, how many $k$-th powers of forms of degree $d$ is required to present a general form of degree $kd$?
\end{problem} 

 For  sums of powers of linear forms a question very similar to Problem~\ref{prob:generic}   was studied in greater detail by J.~Alexander and A.~Hirschowitz in mid 90's  and was completely solved in  a series of papers  
culminated in \cite {AH}, see also \cite{Ci}, \cite {BO}. 
{(This problem  has a long history starting from the XIX century, see \cite{Iar,BO} and it was later posed anew by H.~Davenport.)} 
In our notation this means that one fixes  $d=1$ instead of letting $d$ be an arbitrary positive integer and uses $k$ as a parameter.  The above mentioned authors proved that the weak Waring problem for powers of linear forms 
{has the solution expected
 by  naive dimension count in all cases except for the case of quadrics in all dimensions, cubics in $5$ variables   and quartics in $3$, $4$ and $5$ variables.}  On the other hand, their results and further investigations {indicate} that for $n>1$  the number of powers of linear terms required to present an arbitrary form of a given degree  almost always exceeds the expected one obtained by  naive dimension count, see e.g., 1.6. of \cite{RSch}. 
 
 \medskip 
   Our main result is the following.

 \begin{theorem}\label{th:Nvar} Given a positive integer $k\ge 2,$ then  any general form $f$ of  degree $kd$ in $n+1$ variables is a sum of at most $k^{n}$ {$k$-th powers}. 
Moreover,  for a fixed $n$  this bound is sharp for all sufficiently large $d$.
 \end{theorem}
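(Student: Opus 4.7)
The plan is to reduce the first assertion to surjectivity of a differential. Let $R=\bC[x_0,\ldots,x_n]$, set $N=k^{n}$, and consider the polynomial map
\[
\Phi\colon (S^d_n)^{N}\longrightarrow S^{kd}_n,\qquad (g_1,\ldots,g_N)\longmapsto\sum_{i=1}^{N}g_i^{k}.
\]
Since $\Phi$ is a morphism of affine spaces, dominance is equivalent to the surjectivity of $d\Phi$ at a single point of the source; a direct computation gives $d\Phi_{(g_1,\ldots,g_N)}(h_1,\ldots,h_N)=k\sum_i g_i^{k-1} h_i$, whose image equals $\sum_{i}g_i^{k-1}\cdot S^d_n\subset S^{kd}_n$. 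Terracini's lemma, applied to the variety of $k$-th powers in $\PP(S^{kd}_n)$, therefore reduces the first half of the theorem to exhibiting a single witness $(g_1,\ldots,g_N)$ such that
\begin{equation}\label{pl:witness}
\sum_{i=1}^{N} g_i^{k-1}\cdot S^d_n = S^{kd}_n,
\end{equation}
equivalently, such that the ideal $(g_1^{k-1},\ldots,g_N^{k-1})\subset R$ agrees with $R$ in degree $kd$. Lower semicontinuity of the left-hand dimension then propagates \eqref{pl:witness} to a Zariski open subset of $(S^d_n)^{N}$.

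For the witness itself, I would specialise to $g_i=\ell_i^{d}$ with $\ell_i$ linear, so that $g_i^{k-1}=\ell_i^{d(k-1)}$, and read \eqref{pl:witness} through the apolarity pairing: it becomes the dual statement that no nonzero $F\in S^{kd}_n$ is annihilated by all $N=k^{n}$ constant-coefficient differential operators $\ell_i(\partial)^{d(k-1)}$. A natural candidate is a multiplicative grid of lines carrying a $(\bZ/k)^{n}$ symmetry, for instance
\[
\ell_{\mathbf j}=x_0+\zeta^{j_1}x_1+\cdots+\zeta^{j_n}x_n,\qquad \mathbf j\in\{0,1,\ldots,k-1\}^{n},
\]
with $\zeta$ a primitive $k$-th root of unity; Fourier transform on the index $\mathbf j$ decouples the annihilation condition into $k^{n}$ independent blocks, each of which should be analysable by a direct dimension count using $\dim S^{kd}_n<k^{n}\dim S^d_n$.

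The sharpness claim is the easy half: a sum of $s$ $k$-th powers depends on at most $s\cdot\dim S^d_n$ parameters, and the limit $\dim S^{kd}_n/\dim S^d_n\to k^{n}$ from below forces $s\ge k^{n}$ as soon as $d$ is large enough relative to $n$. The main obstacle is clearly the witness step: the required annihilation is not stable under arbitrary specialisation---already using only the $n{+}1$ coordinate directions leaves a positive-dimensional kernel once $d\gg 0$---so one genuinely needs $k^{n}$ lines of the right multiplicative shape, and the technical heart of the proof will be a Hilbert-function / Macaulay-duality calculation verifying that the ideal generated by the corresponding $d(k-1)$-th powers of the chosen $k^{n}$ linear forms actually covers $R_{kd}$.
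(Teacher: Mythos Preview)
Your reduction is exactly the paper's: Terracini's lemma gives \eqref{pl:witness}, you specialise to $g_i=\ell_i^d$ with the very same $(\bZ/k)^n$-grid of linear forms $\ell_{\mathbf j}=x_0+\zeta^{j_1}x_1+\cdots+\zeta^{j_n}x_n$, and you dualise via apolarity. The sharpness paragraph is also fine and matches the paper.

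The gap is in the witness step, and your own diagnosis of it is honest but the proposed remedy does not work. A Fourier transform over $(\bZ/k)^n$ on the index $\mathbf j$ does make the annihilator $\cap_{\mathbf j}\ker\ell_{\mathbf j}(\partial)^{d(k-1)}$ decompose along the $k^n$ isotypic pieces of $S^{kd}_n$, but inside each piece you still face a nontrivial linear-algebra question; the global inequality $\dim S^{kd}_n<k^n\dim S^d_n$ only says the source of the multiplication map has enough room, which is necessary but not sufficient for surjectivity, and it gives no purchase on any individual block. So ``direct dimension count'' cannot close the argument.

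What the paper does instead is to push the apolarity one step further: being annihilated by $\ell_{\mathbf j}(\partial)^{d(k-1)}$ is equivalent to having all partial derivatives of order $\le d$ vanish at the point $[\ell_{\mathbf j}]\in\PP^n$. The problem then becomes: a form of degree $kd$ (in fact $kd+k-1$) with a zero of multiplicity $d{+}1$ at each of the $k^n$ grid points $(1,\xi_{i_1},\ldots,\xi_{i_n})$ must vanish identically. This is proved by a double induction on $n$ and $d$: the $\binom{n}{2}k$ hyperplanes $x_i=\xi_s x_j$ each contain $k^{n-1}$ of the grid points in the configuration of one dimension lower, so by induction on $n$ the form vanishes on every such hyperplane; dividing by their product $H$ (of degree $\binom{n}{2}k$) drops the multiplicity at each point by $\binom{n}{2}$ and the degree accordingly, and induction on $d$ finishes. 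This geometric ``peel off a hyperplane arrangement'' argument is the missing technical heart; nothing in the Fourier picture replaces it.
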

 
 Thus   $k^n$ gives an upper bound for the answer to Problem~\ref{prob:generic}  for any $n\ge 1$ and $k\ge 2$,
{and it is  optimal  for all sufficiently large $d$, see Remark $1$ in \S~3}. 
 
\medskip 
\noindent 
{\em Acknowledgments.} The second author wants to thank the Mittag-Leffler Institute for  hospitality and support during his visit to Stockholm in Spring 2011 when his collaboration with the other authors started. 
({The second author is a member of GNSAGA of Italian INDAM.})
The  third author  is sincerely grateful  to Professor Claus Scheiderer for the formulation of the problem and discussions as well as to the Department of Mathematics, University of Konstanz for the hospitality in December 2008 and March 2010.  
   
\section{Geometric reformulation and proof}

For simplicity we work  over $\bC$,  
although our results hold for any algebraically closed field of characteristic zero. 
We refer to \cite{Land} as a basic source of information on the geometry of tensors and its applications. 
The following result is classical,  see, e.g., \cite{CLM}.

 \begin{theorem}\label{th:2var} 

{\rm(i)} Any form $f$ of even degree $2d$ in $2$ variables is a sum of at most two squares; 

\noindent
{\rm(ii)} a general form  of even degree $2d$ in $2$ variables can be represented as a sum of two squares in exactly ${{2d-1}\choose d}$ ways.
 \end{theorem}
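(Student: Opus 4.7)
My plan is to reduce the sum-of-two-squares question to a binary factorization count via the classical identity
\[
g^2 + h^2 = (g+ih)(g-ih).
\]
Concretely, every representation $f = g^2 + h^2$ with $\deg g = \deg h = d$ corresponds to a factorization $f = AB$ with $\deg A = \deg B = d$: from $(g,h)$ one sets $A = g+ih$ and $B = g-ih$, and conversely $g = (A+B)/2$, $h = (A-B)/(2i)$ satisfy $g^2 + h^2 = \frac{(A+B)^2 - (A-B)^2}{4} = AB = f$.

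For part (i), since $\bC[x_0,x_1]$ is a UFD, any binary form $f$ of degree $2d$ splits into $2d$ linear forms (with multiplicity). Grouping any $d$ of them into $A$ and the remaining $d$ into $B$ yields a factorization $f = AB$ with both factors of degree $d$, and the above correspondence immediately produces $f = g^2 + h^2$, showing that $f$ is a sum of at most two squares (the degenerate case $g = 0$ arises when $A = -B$, i.e.\ when $f$ is the negative of a square).

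For part (ii), a generic $f$ has $2d$ pairwise distinct roots, so a factorization $f = AB$ with $\deg A = \deg B = d$ is determined by choosing which $d$ of the $2d$ linear factors enter $A$, up to the scalar ambiguity $A \mapsto \lambda A$, $B \mapsto \lambda^{-1} B$. Under the correspondence, this scalar ambiguity translates into the one-parameter family of $\mathrm{SO}(2,\bC)$-rotations
\[
(g,h) \mapsto (\alpha g + i\beta h,\, -i\beta g + \alpha h), \qquad \alpha^2 - \beta^2 = 1,
\]
which preserve $g^2 + h^2 = f$; similarly, swapping $A \leftrightarrow B$ corresponds to $(g,h) \mapsto (g,-h)$, leaving $\{g^2, h^2\}$ unchanged. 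Thus the essentially different representations are in bijection with unordered partitions of the $2d$ roots into two subsets of size $d$, and their number is $\binom{2d}{d}/2 = \binom{2d-1}{d}$.

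The main point requiring care is the bookkeeping of equivalences: one must identify the $\bC^*$-scaling freedom in the factorization $f = AB$ with the $\mathrm{SO}(2,\bC)$-symmetry of the sum-of-squares representation, and verify that for generic $f$ no two distinct root-partitions accidentally produce equivalent decompositions. Once this is set up, the count is a trivial binomial identity.
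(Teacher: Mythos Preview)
Your proof is correct and follows exactly the same route as the paper: both use the identity $f = AB = \bigl(\tfrac{1}{2}(A+B)\bigr)^2 + \bigl(\tfrac{i}{2}(A-B)\bigr)^2$ and count factorizations of a generic $f$ into two degree-$d$ factors as $\tfrac{1}{2}\binom{2d}{d} = \binom{2d-1}{d}$. The paper's argument is a two-line sketch; your version simply makes explicit the bookkeeping (the $\bC^*$-scaling on $(A,B)$ matching the $\mathrm{SO}(2,\bC)$-orbit on $(g,h)$, and the swap $A\leftrightarrow B$ matching $(g,h)\mapsto(g,-h)$) that the paper leaves implicit.
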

 
The proof follows from the identity
$$f=A\cdot B =\left[\frac{1}{2}(A+B)\right]^2+\left[\frac{i}{2}(A-B)\right]^2$$
and ${{2d-1}\choose d}=\frac{1}{2}{{2d}\choose d}$ is the number of ways $f$ can be presented as the product of two factors $A$ and $B$ of equal degree. Thus,  for $n=1$  and $k=2$ the answer to Problem~\ref{prob:1} is two.

We recall that for any projective variety $X$, its $p$-th secant variety is defined as 
 the Zariski closure of the union of the projective spans 
$<x_1,\ldots , x_p>$ where $x_i\in X$.
The following result gives a convenient reformulation of our problem.

\begin{theorem}\label{th:3} Given a linear space $V,$ a general polynomial in $S^{kd}V$ is a sum
of $p$ $k$-th powers $g_1^k,\ldots g_p^k$ where $g_i\in S^d V$
if and only if for $p$ general  forms $g_i\in S^d V$, $i=1,\ldots p$,
the ideal generated by {$g_1^{k-1},\ldots g_p^{k-1}$}   contains $S^{kd}V$. (We shall call such an ideal  $kd$-regular.) 
\end{theorem}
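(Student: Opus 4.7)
The plan is to recognize this as a Terracini-type statement about the differential of the sum-of-$k$-th-powers map. Concretely, I would consider the polynomial map
\[
\phi \colon (S^d V)^{\oplus p} \longrightarrow S^{kd} V, \qquad (g_1,\ldots, g_p)\longmapsto g_1^k+\cdots+g_p^k.
\]
By definition, a general element of $S^{kd}V$ is a sum of $p$ such powers precisely when the image of $\phi$ is Zariski dense, i.e.\ when $\phi$ is dominant. Since source and target are irreducible affine varieties over a field of characteristic zero, dominance of $\phi$ is equivalent to surjectivity of the differential $d\phi_g$ at a general point $g$; this is the standard fact I would invoke at this step.

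Next I would compute the differential via the Leibniz rule:
\[
d\phi_g(h_1,\ldots, h_p)= k\sum_{i=1}^p g_i^{k-1}h_i, \qquad h_i\in S^d V.
\]
As $(h_1,\ldots,h_p)$ varies, the image of $d\phi_g$ is the subspace
\[
g_1^{k-1}\cdot S^d V + \cdots + g_p^{k-1}\cdot S^d V \ \subseteq\ S^{kd}V,
\]
which is exactly the degree-$kd$ graded component of the homogeneous ideal $I=(g_1^{k-1},\ldots,g_p^{k-1})$. Thus $d\phi_g$ is surjective precisely when $I$ contains all of $S^{kd}V$, i.e.\ when $I$ is $kd$-regular in the paper's terminology. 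Semicontinuity ensures that the locus of $g$ where surjectivity fails is Zariski closed, so ``general'' has the same meaning on both sides of the claimed equivalence.

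I expect the main subtlety to lie not in the short computation itself, which is routine, but in pinning down the two occurrences of ``general'': the statement silently conflates the generic member of $S^{kd}V$ being representable as $\sum g_i^k$ with representability \emph{for} a generic choice of the $g_i$. These agree exactly because $\phi$ is a morphism of irreducible varieties, so its image is dense if and only if $d\phi_g$ is surjective on a nonempty Zariski open set of tuples $g$. Once this equivalence is laid out, chaining the reformulations above gives the theorem with essentially no further work, and one also sees that the ``$k^n$ powers of degree $d$'' count from the introduction precisely matches the Terracini lower bound $\dim S^{kd}V/\dim S^dV$, up to rounding.
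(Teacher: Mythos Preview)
Your argument is correct and is essentially the same as the paper's: both compute the tangent/differential of the $k$-th power map and identify its image with the degree-$kd$ piece of the ideal $(g_1^{k-1},\ldots,g_p^{k-1})$. The only difference is packaging---the paper phrases this via secant varieties in $\PP S^{kd}V$ and invokes Terracini's lemma, whereas you work affinely with the sum-of-powers morphism $\phi$ and the equivalence of dominance with generic surjectivity of $d\phi$; these are standard reformulations of one another.
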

\begin{proof} The statement is a direct consequence of Terracini's lemma.
 Consider the subvariety
$X$ in the ambient space $\PP S^{kd}V$ consisting of the $k$-th powers of all forms from $S^{d}V$. 
The tangent space to $X$ at $g_i^{k}\in X$ is of the form $\{g_i^{k-1}f|f\in S^d V\}$.
Therefore,  the $p$-secant variety of $X$ coincides with  the ambient space $\PP S^{kd}V$ if and only if 
the span of the tangent spaces to $X$ at general $g_i^k$,
(which is equal to $\{\sum_{i=1}^pg_i^{k-1}f_i|f_i\in S^dV\}$), coincides with  $\PP S^{kd}V$ as well.
\end{proof}

Theorem~\ref{th:3}  relates Problem~\ref{prob:generic} to a special case
of a conjecture of the first author 
about the Hilbert series of  ideals generated by general forms in given degrees, see  \cite{FH}.

\medskip
We will show that  if $V$ is an $(n+1)$-dimensional linear space   then the ideal generated by $k^n$ general forms of the form
$g_i^{k-1}$ where $g_i\in S^{d}_nV$ is $kd$-regular, i.e., contains $S^{kd}V$. 

In order to do this  it suffices to find $k^n$
 specific polynomials $\{g_1,\ldots g_{k^n}\}$ of degree $d$ such that the ideal generated by the powers $g_i^{k-1}$ is $kd$-regular. Below, we will choose as  $g_i$'s  powers of certain linear forms.
 For powers of linear forms one can use  a new point of view related to apolarity. 
The  space $T_{g^k}X^{\perp}$ orthogonal to $T_{g^k}X=\{g^{k-1}f|f\in S^d V\}$ is given by 
$T_{g^k}X^{\perp}=\{h\in S^{kd}V^{\vee}| h\cdot g^{k-1}=0\in S^dV^{\vee}\}$, i.e., is 
 the space of polynomials in $V^{\vee}$  apolar to $g^{k-1}$. 
Moreover, when $g=l^m$, $l\in V$ the classical theory of apolarity provides a better result (for a recent reference see  Lemma on page 1094 of \cite{Iar}).

\begin{proposition}\label{pr:1} A form $f\in S^mV^{\vee}$ is apolar to {$l^{m-k}$}, i.e., $l^{m-k}f=0$ if and only if   all the derivatives of $f$ of order $\le k$ vanish at {$l\in V$}. 
\end{proposition}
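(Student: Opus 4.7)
The plan is to reduce the statement to a specific $l$ by equivariance and then compute directly in coordinates. Both sides of the equivalence depend $\mathrm{GL}(V)$-equivariantly on the pair $(l,f)$, so after a linear change of coordinates on $V$ I may assume that $l=e_0$ is the first basis vector. Let $y_0,\ldots,y_n$ be the dual coordinates on $V^{\vee}$; then the apolarity action of $l\in V$ on $S^{m}V^{\vee}$ is the partial derivative $\partial/\partial y_0$.

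Next, I would write $f=\sum_{j=0}^{m} y_0^{j}\, g_{j}(y_1,\ldots,y_n)$ with each $g_j$ homogeneous of degree $m-j$. A direct computation yields
$$l^{m-k}f = \left(\frac{\partial}{\partial y_0}\right)^{m-k} f = \sum_{j\ge m-k} \frac{j!}{(j-m+k)!}\, y_0^{j-m+k}\, g_{j},$$
and since the monomials $y_0^{j-m+k}g_j$ for different $j\ge m-k$ are linearly independent (they sit in distinct bigraded pieces with respect to $y_0$), this expression vanishes if and only if $g_j=0$ for every $j\ge m-k$.

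For the other direction, evaluating a mixed partial $\partial^{\alpha}f$ at the point $l=e_0=(1,0,\ldots,0)$ selects the coefficient of a single monomial of $f$: for a multi-index $\alpha$ with $|\alpha|\le k\le m$, only the monomial $y^{\beta}$ with $\beta_i=\alpha_i$ for $i\ge 1$ and $\beta_0=m-(\alpha_1+\cdots+\alpha_n)$ contributes, and its coefficient $c_{\beta}$ appears multiplied by a nonzero combinatorial factor. As $\alpha$ ranges over all multi-indices with $|\alpha|\le k$, the corresponding $\beta$'s are precisely those with $\beta_0\ge m-k$, which are exactly the monomials supported in the pieces $g_j$ with $j\ge m-k$. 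Consequently, the vanishing of all derivatives of $f$ of order $\le k$ at $l$ is equivalent to $g_j=0$ for every $j\ge m-k$, matching the condition from the previous paragraph.

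The only piece of bookkeeping requiring any care is to verify that for each target $\beta$ with $\beta_0\ge m-k$ there really is some $\alpha$ with $|\alpha|\le k$ that picks out $c_{\beta}$; this is immediate from the choice $\alpha_0=0,\ \alpha_i=\beta_i$, which gives $|\alpha|=m-\beta_0\le k$. I do not expect any serious obstacle. A slightly more conceptual alternative would be to pass to the symmetric multilinear polarization $\tilde f$ and rewrite both conditions as the vanishing of $\tilde f(l,\ldots,l,\,\cdot,\ldots,\cdot)$ with $m-k$ and at least $m-k$ entries equal to $l$ respectively, whence the equivalence follows from the symmetry of $\tilde f$; the coordinate proof above is essentially the combinatorial manifestation of this.
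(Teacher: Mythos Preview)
Your argument is correct. The paper itself does not supply a proof of this proposition; it simply cites the lemma on p.~1094 of Iarrobino's paper \cite{Iar} as a reference for this classical fact. So there is no ``paper's own proof'' to compare against, and what you have written is a self-contained verification that is entirely adequate.

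A couple of minor cosmetic points. First, your phrase ``the dual coordinates on $V^{\vee}$'' is a slip: the $y_i$ are elements of $V^{\vee}$ and hence coordinates on $V$, which is indeed what you use throughout (you evaluate $f$ and its derivatives at $e_0\in V$). Second, the polarization remark at the end is a nice conceptual gloss and could in fact replace the coordinate computation entirely if you prefer: the condition $l^{m-k}f=0$ says $\tilde f(l,\ldots,l,v_1,\ldots,v_k)=0$ for all $v_i$, while $\partial^{\alpha}f(l)=0$ for $|\alpha|\le k$ says $\tilde f(l,\ldots,l,v_1,\ldots,v_{|\alpha|})=0$ for all $v_i$, and these are equivalent by symmetry of $\tilde f$. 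Either presentation is fine.
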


Using Proposition~\ref{pr:1} one can reduce   Theorem \ref{th:Nvar} to the following statement.

\begin{theorem}\label{greform}
For a given  integer $k\ge 2$   a form of degree $kd$ in $(n+1)$ variables
which has all derivatives of order $\le d$ vanishing at $k^n$ general points
vanishes identically.
 \end{theorem}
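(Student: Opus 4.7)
My plan is to prove Theorem~\ref{greform} by induction on $n$, degenerating the $k^n$ general points to a specific configuration on a union of hyperplanes and running a Horace-style residual descent. The base case $n=1$ is immediate: a binary form of degree $kd$ that vanishes to order $\ge d+1$ at $k$ points has total vanishing at least $k(d+1) > kd$, so it must be identically zero.

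For the inductive step, by upper-semicontinuity of the Hilbert function of a fat-point scheme, it suffices to exhibit one specific configuration of $k^n$ points for which the conclusion holds. I would pick $k$ general hyperplanes $H_1,\ldots,H_k\subset\PP^n$ and on each $H_i\cong\PP^{n-1}$ choose $k^{n-1}$ general points, giving $k^n$ points in total, each lying on a unique $H_i$. Suppose $f$ is a form of degree $kd$ whose partial derivatives of order $\le d$ all vanish at these points (equivalently, $f$ has multiplicity $\ge d+1$ at each). Restricting $f$ to $H_i$ yields a form of degree $kd$ in $n$ variables with multiplicity $\ge d+1$ at $k^{n-1}$ general points of $H_i$; the induction hypothesis forces $f|_{H_i}\equiv 0$, hence $H_i\mid f$. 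Since the $H_i$ are distinct, $H_1\cdots H_k\mid f$, so $f=(H_1\cdots H_k)\,g_1$ with $\deg g_1=k(d-1)$. As each marked point lies on exactly one $H_i$, the product $H_1\cdots H_k$ has multiplicity $1$ there, and therefore $g_1$ has multiplicity $\ge d$ at every marked point. Iterating this step $d$ times yields $f=(H_1\cdots H_k)^d\,g_d$, where $\deg g_d=0$ and $g_d$ vanishes at the marked points, forcing $g_d=0$ and thus $f\equiv 0$.

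The main obstacle lies in ensuring that the inductive hypothesis can be reapplied cleanly at each stage of the descent: at the $j$-th step the residual form $g_j$ has degree $k(d-j)$ and must have multiplicity $\ge(d-j)+1$ at $k^{n-1}$ general points on each $H_i$. This is why the induction must be stated uniformly for all $d$ at fixed $n$: the inner iteration invokes the $(n{-}1)$-variable statement for the decreasing values $d,\,d{-}1,\ldots,1$. A secondary, more cosmetic point is checking that the $k^{n-1}$ points chosen on each $H_i$ are indeed ``general'' in the sense the induction requires; this holds because the hyperplanes and the points on them were selected generically and independently, so each $H_i$ carries $k^{n-1}$ points in general position within $H_i\cong\PP^{n-1}$, and moreover every marked point avoids $H_{i'}$ for $i'\ne i$, which is what makes the multiplicity accounting in the descent step exact.
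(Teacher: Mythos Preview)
Your argument is correct and sits in the same Horace--method family as the paper's proof (semicontinuity to a special configuration, induction on $n$, then a restrict--divide--iterate descent), but the specialization you pick is genuinely different. The paper degenerates to the explicit configuration $(1,\xi_{i_1},\ldots,\xi_{i_n})$ with the $\xi_j$ running over the $k$-th roots of unity and uses the $\binom{n}{2}k$ hyperplanes $x_i=\xi_s x_j$ for $1\le i<j\le n$; every point lies on exactly $\binom{n}{2}$ of these, so one pass through the arrangement lowers the degree by $\binom{n}{2}k$ and the multiplicity by $\binom{n}{2}$, which is why the paper runs a double induction on $n$ and $d$. You instead spread the $k^n$ points over $k$ general hyperplanes, one hyperplane per point, so each pass lowers degree by $k$ and multiplicity by $1$, and you simply iterate $d$ times invoking the $(n{-}1)$--dimensional statement for $d,d{-}1,\ldots,1$. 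Your route is a bit cleaner (no separate treatment of small $d$, and the multiplicity bookkeeping is trivial since each point meets the divisor $H_1\cdots H_k$ with multiplicity exactly one), but it gives up two things the paper's choice buys: an explicit point set, which is what yields the Corollary about the forms $(x_0+\xi_{i_1}x_1+\cdots+\xi_{i_n}x_n)^{(k-1)d}$, and the extra slack in the degree---the paper's configuration actually proves the stronger Theorem~\ref{greform1} for degree $kd+k-1$, whereas your descent terminates exactly at degree $kd$.
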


Our final effort will be  to settle Theorem \ref{greform}.
Denote   
by $x_0,\ldots x_n$ a basis of $V$.
Let $\xi_i=e^{2\pi i\sqrt{-1}/k}$ for $i=0,\ldots, k-1$ be the (set of all) $k$-th roots of unity.
 By semicontinuity, it is enough to find  $k^n$ special points  
in ${\PP V}\simeq {\bf P}^n$ such that a polynomial of degree $kd$ in ${\bf P}^n$
which has all derivatives of order $\le d$ vanishing at these points must necessarily vanish  identically. As such points we 
 choose the points 
$(1,\xi_{i_1},\xi_{i_2},\ldots,\xi_{i_n})$ where $0\le i_j\le k-1$,
$1\le j\le n$.

\medskip 
The following result proves even more than was claimed in  Theorem \ref{greform}.

\begin{theorem}\label{greform1}
For a given integer  $k\ge 2$   a form of degree $kd+k-1$ in $(n+1)$ variables
which has all derivatives of order $\le d$ vanishing at $k^n$ general points
vanishes identically. 
 \end{theorem}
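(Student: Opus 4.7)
The plan is first to invoke the semicontinuity argument already used in the excerpt (so that it suffices to check the specific grid points $(1, \xi_{i_1}, \ldots, \xi_{i_n})$ with $0 \le i_j \le k-1$), then to dehomogenize in the chart $x_0 = 1$, and finally to run a double induction on $(d, n)$. Setting $f(x_1, \ldots, x_n) := F(1, x_1, \ldots, x_n)$, the hypothesis on $F$ transforms into the statement that $f$ is a polynomial of degree at most $kd + k - 1$ with multiplicity at least $d+1$ at every point of the grid $\Xi_n := \{(\xi_{i_1}, \ldots, \xi_{i_n}) : 0 \le i_j \le k-1\} \subset \bC^n$, and it is enough to prove $f \equiv 0$.

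I would perform an outer induction on $d$ and, for each $d$, an inner induction on $n$. The base $n = 1$ (any $d$) is immediate: a univariate polynomial of degree $\le kd + k - 1$ with $k$ roots of multiplicity $\ge d+1$ possesses at least $k(d+1) = kd + k$ zeros counted with multiplicity, exceeding its degree and forcing $f \equiv 0$. The base $d = 0$ (any $n$) follows by the same inner induction: restricting $f$ (of degree $\le k-1$) to each affine hyperplane $x_n = \xi_{i_n}$ yields a polynomial of degree $\le k-1$ in $n-1$ variables vanishing on $\Xi_{n-1}$, which is zero by the inner inductive hypothesis; consequently $x_n^k - 1 = \prod_{i=0}^{k-1}(x_n - \xi_i)$ divides $f$, and the degree constraint forces $f = 0$.

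For the inductive step with $d \ge 1$ and $n \ge 2$, assume the statement for all $(d', n')$ that are lexicographically smaller than $(d, n)$. Restricting $f$ to each hyperplane $x_n = \xi_{i_n}$ gives a polynomial of degree $\le kd + k - 1$ in $n-1$ variables with multiplicity $\ge d+1$ at every point of $\Xi_{n-1}$ (partials in $x_1, \ldots, x_{n-1}$ commute with the restriction), hence zero by the inner inductive hypothesis. Therefore $f = (x_n^k - 1)\, g$ for some $g$ of degree $\le k(d-1) + (k-1)$. The key observation is that $x_n^k - 1$ vanishes to order exactly $1$ at every point of $\Xi_n$, since its $x_n$-derivative $k x_n^{k-1}$ is nonzero at $\xi_{i_n} \ne 0$; consequently $g$ has multiplicity at least $d$ at every point of $\Xi_n$. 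This places $g$ precisely in the hypothesis of the theorem with $d$ replaced by $d-1$, so the outer inductive hypothesis forces $g \equiv 0$ and hence $f \equiv 0$.

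The main conceptual task is arranging the nested induction so that both parameters decrease compatibly; the computational heart is the multiplicity-by-one drop when $x_n^k - 1$ is removed, which is valid precisely because every grid point lies away from $x_n = 0$. Once this is in place the double induction closes automatically, and the sharper bound $kd + k - 1$ in the statement (as opposed to $kd$) comes essentially for free, since the univariate base case has exactly $k - 1$ degrees of slack to absorb.
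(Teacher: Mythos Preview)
Your argument is correct. Both proofs share the same skeleton---semicontinuity, specialization to the grid $(1,\xi_{i_1},\dots,\xi_{i_n})$, and a double induction on $n$ and $d$ in which one first restricts to a family of hyperplanes to reduce the number of variables and then divides out to reduce $d$---but the choice of hyperplanes is genuinely different. The paper works projectively with the $\binom{n}{2}k$ hyperplanes $x_i=\xi_s x_j$ for $1\le i<j\le n$; every grid point lies on exactly $\binom{n}{2}$ of them, so dividing by their product $H$ lowers the degree by $\binom{n}{2}k$ and the multiplicity by $\binom{n}{2}$ in one stroke. You instead dehomogenize and use only the $k$ hyperplanes $x_n=\xi_i$; each grid point lies on exactly one, so dividing by $x_n^k-1$ lowers both the degree and the imposed multiplicity by one unit of $k$ and $1$ respectively. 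Your version is more elementary (no large arrangement, no separate treatment of the range $d\le\binom{n}{2}-1$, and the multiplicity drop is a one-line local computation), at the modest cost of a slower descent in $d$. The paper's arrangement, on the other hand, is more symmetric in $x_1,\dots,x_n$ and ties the proof to the combinatorics of hyperplane arrangements mentioned later in the remarks, which may be why the authors preferred it.
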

\begin{proof}

As above we choose as  our configuration the $k^n$ points 
$(1,\xi_{i_1},\xi_{i_2},\ldots,\xi_{i_n})$ where $0\le i_j\le k-1$,
$1\le j\le n$. Consider first the  case  $n=1$.
If a form $f(x_0,x_1)$ of degree $kd+k-1$ has its 
derivatives of order $\le d$ vanishing at all $(1,\xi_i)$, then 
$f$ should be {divisible by} $(x_1-\xi_ix_0)^{d+1}$ 
for $i=0,\ldots k-1$ and, therefore,  {if $f$ is not vanishing identically, 
then its  degree should be at least $k(d+1)$, which is a contradiction.}

For $n\ge 2$ consider the {arrangement} of  ${n\choose 2}k$ hyperplanes given by  $x_i=\xi_s x_j$ where 
$1\le i<j\le n$, $0\le s\le k-1$. One can easily check that this {arrangement}   has the property that each hyperplane contains exactly $k^{n-1}$ points and,  furthermore, each point is contained in exactly $n\choose 2$ hyperplanes. 
{Indeed, consider, for example, the hyperplane $\HH$ given by $x_n=\xi_ix_{n-1}$.
The natural parametrization of $\HH$ is by 
 $(x_0,\ldots, x_{n-1})\mapsto 
(x_0,x_1,\ldots,x_{n-1},\xi_ix_{n-1})$ and the $k^{n-1}$ points which lie on $\HH$
correspond, according to this parametrization, exactly to   $(1,\xi_{i_1},\xi_{i_2},\ldots,\xi_{i_{n-1}})$ for $0\le i_j\le k-1$,
$1\le j\le n-1$. In other words, they correspond exactly to our arrangement of points in the previous dimension $n$.} Our  proof now proceeds  by a double induction on the number of variables $n$ and degree $d$.
Assume that the statement holds  for all $d$ and up to  $n$ variables. (The case $n=1$ is settled above.) Let us perform a step of  induction in $d$. First {we settle the case $d\le {n\choose 2}-1$. Consider a polynomial $f$ of degree $kd+k-1$ satisfying our assumptions.
Restricting  $f$ to each of  the above ${n\choose 2}k$ hyperplanes  $x_i=\xi_s x_j$ where 
$1\le i<j\le n$, $0\le s\le k-1$ we obtain the same situation in dimension $n$.
By the induction hypothesis $f$ vanishes on each such hyperplane and, therefore,  must be divisible by  $H$, where $H$ is   the product of the linear forms $x_i=\xi_sx_j$ defining all the chosen hyperplanes. (Obviously, $\deg H= {n\choose 2}k$.) 
Thus, $f$ vanishes identically since $k\left({n\choose 2}-1\right)+k-1<{n\choose 2}k$.} 
 For higher degrees we argue as follows. Take a form  $f$  of degree {$kd+k-1$} satisfying our assumptions.
Restricting  as above $f$ to each of  the above ${n\choose 2}k$ hyperplanes  $x_i=\xi_s x_j$  we obtain the same situation in dimension $n$.
{Again}, by the induction hypothesis $f$ vanishes on each such hyperplane and must be divisible by  $H$. We get
$$f=H\tilde f$$
where {$\deg \tilde f=k\left(d-{n\choose 2}\right)+k-1$} and
$\tilde f$ has all derivatives of order $\le d-{n\choose 2}$ vanishing at the same $k^n$
points $(1,\xi_{i_1},\xi_{i_2},\ldots,\xi_{i_n})$. {Indeed,
in any affine coordinate system centered at any of these points, $f$ has no terms of degree $\le d$. 
Since $H$ has its lowest term in degree $n\choose 2$
it follows that $\tilde f$ has no terms in degree $\le d-{n\choose 2}$.}
By the induction hypothesis $\tilde f$ is identically zero. 
\end{proof}

\medskip
Notice that  we have also obtained the following result of  independent interest.
\medskip 
\begin{corollary}Any form of degree $kd$ 
in $(n+1)$ variables can be expressed as a linear combination
 of the polynomials
$(x_0+\xi_{i_1}x_1+\xi_{i_2}x_2+\ldots +\xi_{i_n}x_n)^{(k-1)d}$ with coefficients being  polynomials of degree $d$. 
\end{corollary}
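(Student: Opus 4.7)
The plan is to observe that the corollary is essentially the apolar/dual reformulation of Theorem \ref{greform1} (or Theorem \ref{greform}) for the specific choice of linear forms $l_i = x_0+\xi_{i_1}x_1+\ldots+\xi_{i_n}x_n$ corresponding to the $k^n$ points $(1,\xi_{i_1},\ldots,\xi_{i_n})\in\PP^n$, so essentially no new work is required beyond unwinding definitions.

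First, I would reformulate the claim: asserting that every $f\in S^{kd}V$ admits a decomposition $f=\sum_i l_i^{(k-1)d}\,h_i$ with $\deg h_i = d$ is exactly saying that the subspace
$$W := \sum_{i=1}^{k^n}\{\, l_i^{(k-1)d}\cdot h : h\in S^d V\,\}\subseteq S^{kd}V$$
coincides with $S^{kd}V$. By standard duality, $W=S^{kd}V$ if and only if $W^{\perp}=0$ inside $S^{kd}V^\vee$, where $W^\perp$ consists of those forms $F$ of degree $kd$ which are apolar to each $l_i^{(k-1)d}$.

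Next I would invoke Proposition~\ref{pr:1} with $m=kd$ and exponent $m-k'=(k-1)d$, i.e., $k'=d$: a form $F\in S^{kd}V^{\vee}$ is apolar to $l_i^{(k-1)d}$ if and only if all derivatives of $F$ of order $\le d$ vanish at the point $l_i\in V$. Hence $W^\perp$ is exactly the space of forms of degree $kd$ whose partials of order $\le d$ vanish at all $k^n$ chosen points $(1,\xi_{i_1},\ldots,\xi_{i_n})$.

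Finally, Theorem~\ref{greform1} (applied in degree $kd\le kd+k-1$) --- or even Theorem~\ref{greform} as stated --- tells us precisely that the only such form is the zero form, for exactly this configuration of $k^n$ points. Therefore $W^\perp=0$, so $W=S^{kd}V$, establishing the claimed linear combination. There is no real obstacle here: the entire content lies in the apolarity translation via Proposition~\ref{pr:1} together with the already-proven vanishing result. The only thing to be careful about is the index arithmetic in Proposition~\ref{pr:1} (matching $m$, $k'$, and the exponent $(k-1)d$ correctly) and the observation that the theorem was proved for these specific points, not merely for general ones, so the conclusion applies to \emph{every} form $f$ and not only to a generic one.
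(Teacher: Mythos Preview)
Your proposal is correct and matches the paper's approach exactly: the paper gives no separate proof of the corollary, simply noting that it has been ``obtained'' from the preceding argument, which is precisely the chain you spell out (orthogonality via apolarity, Proposition~\ref{pr:1}, then the vanishing result of Theorem~\ref{greform}/\ref{greform1} for the specific roots-of-unity configuration). Your remark that the vanishing theorem was proved for these particular points, and not merely for general ones, is exactly what upgrades the conclusion from ``a general $f$'' to ``every $f$''.
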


 \section{Final remarks}
 
 \noindent 
 {\bf Remark 1.} Although $k^n$ is the correct asymptotic bound,
it seems to be  sharp only for considerably large values $d$. In particular,  computer experiments show that for $k=2,\; n=3$ 
and $d\le 20$ seven general polynomials of degree $d$ suffice to generate the space of polynomials in degree $2d$.   All eight polynomials are required  only for $d\ge 21$ . Similarly, for $n=4$ and $d\le 75$  experiments suggest that $15$ (instead of the expected $16$)
general polynomials of degree $d$ suffice to generate the space of polynomials  in degree $2d$. Analogously,  all $16$ polynomials are required  for $d\ge 76$. The ultimate challenge of this project is to solve completely Problem~\ref{prob:generic} for triples $(n,k,d)$, and, in particular,  to find the complete list of exceptional triples for which the answer to Problem~\ref{prob:generic}  is larger than the one obtained by dimension count. Obviously, this list should include the list of exceptional cases obtained earlier by J.~Alexander and A.~Hirschowitz.  

\medskip
\noindent
{\bf Remark 2.} Theorem~\ref{th:Nvar}  seems to be new even in the classical case $k=2$, i.e.,  
for a sum of squares. In this case we have shown that any form of degree $2d$ 
in $(n+1)$ variables can be expressed as a linear combination of the polynomials
$(x_0\pm x_1\pm x_2+\ldots +\pm x_n)^{d}$  with coefficients being polynomials of degree $d$. Note that the former polynomials have real coefficients.   But, obviously,  our main result does not hold over the reals. It only implies that there is, in the usual topology, 
an open  set  of real polynomials of degree $2d$
which can be expressed as real linear combinations of $2^n$ squares of
real polynomials of degree $d$. In other words,  $2^n$ is a typical rank, see e.g., \cite{CO}.  
Notice that  other typical ranks might also appear on other open subsets {of} polynomials. 
An example with three distinct typical ranks occurring in a similar situation can be found in  \cite{CO}.  

\medskip
\noindent
{\bf Remark 3.} Although we only used  powers of linear forms as the generators of the ideal in the above arguments 
it is not true that a general polynomial of degree $kd$
can be expressed as a sum of at most $k^n$ powers $l_i^{kd}$ of linear forms $l_i$. 
For large $d$ the number of necessary summands of the latter problem (solved by  J.~Alexander and A.~Hirschowitz) 
equals 
$\left\lceil\frac{{{kd+n}\choose n}}{n+1}\right\rceil$.  It grows as
$(kd)^n/(n+1)!$ and  is considerably larger than $k^n$. 

\medskip
\noindent
{\bf Remark 4.} Notice that the family of ideals generated by the powers of linear forms $(\xi_0x_0+\xi_1 x_1+\xi_2x_2+\ldots +\xi_n x_n)^{d}$ 
is a special case of ideals associated with hyperplane arrangements that appeared in several publications in the last decade, see e.g., \cite{PSh} and \cite{AP}. In particular, it should be possible to calculate the Hilbert series of the quotient of the polynomial ring modulo these ideals and the answer should be a certain specialization of the Tutte polynomial of the vector configuration given by the above linear forms, cf. \S~5 of \cite {AP}.  

\medskip 
\noindent
{\bf Remark 5.} The above mentioned  conjecture of the first author prescribes the Hilbert series of a homogeneous ideal generated by general forms of given degrees. Computer experiments show that the ideals generated by the powers of linear forms $(\xi_0x_0+\xi_1 x_1+\xi_2x_2+\ldots +\xi_n x_n)^{d}$  have, in general, another Hilbert series. On the other hand, it seems that in case $k=2$ a different family  of ideals generated by the powers of  linear forms have the predicted Hilbert series. Namely, for every non-empty subset $I\subset\{0,\ldots n\}$ define $x_I=\sum_{i\in I}x_i$ and  take  $(x_I)^d$ for all subsets $I$ with $|I|$ odd as generators of the ideal in question.  

\medskip 
\noindent
{\bf Remark 6.} It is classically known that plane quartics can be represented as a sums of three squares. It was recently observed in \cite{BHORS} that the {closure of the} set of plane sextics which are a sums of three squares forms  a hypersurface  
 of degree 83200 in the space of all sextics. 

{
\medskip 
\noindent
{\bf Remark 7.} Our results can be interpreted in the setting of osculating varieties.
In notations of \cite{BCGI} we have shown that any $k$-osculating space at the $k^n$-th secant variety
of the $kd$-Veronese embedding of $\PP^n$ fills out the ambient space.}


\begin{thebibliography}{30}


\bibitem {AH} J.~Alexander, A.~Hirschowitz, {Polynomial interpolation in several variables}, J.Alg. Geom., {\bf 4} (1995),  201--222. 

\bibitem{AP}ÊF.~Ardila,  A.~Postnikov,  Combinatorics and geometry of power ideals. Trans. Amer. Math. Soc. {\bf 362(8)}  (2010),  4357--4384.

\bibitem{CLM} M.~Choi, T.~Lam, B.~Reznick, {Sums of squares of real polynomials}, 
Proc. of Symp. in Pure Math. {\bf 58(2)} (1995), 103--126.

\bibitem{GV} L.~Gallardo, L.~Vaserstein, {The strict Waring problem for polynomial rings}, Journal of Number Theory {\bf 128} (2008), 2963--2972. 

\bibitem {Iar} A.~Iarrobino, {Inverse systems of a symbolic power II. The Waring problem for forms},  J. Algebra, {\bf 174} (1995), 1091--1110.  

\bibitem{BCGI} A.~Bernardi, M.~V.~Catalisano, A.~Gimigliano, M.~Id\'a, Secant varieties to osculating varieties of Veronese embeddings of $\PP^n$, 
J. Algebra, {\bf 321} (2009),  982--1004. 

\bibitem{BHORS} G.~Blekherman, J.~Hauenstein, J.~C.~Ottem, K.~Ranestad, B.~Sturmfels, Algebraic boundaries of Hilbert's SOS cones, arXiv:1107.1846. 

\bibitem{BO} M.~C.~Brambilla, G.~Ottaviani, {On the Alexander-Hirschowitz theorem}, J. Pure Appl. Alg. {\bf 212} (2008), 1229--1251. 

\bibitem {Ci}  C.~Ciliberto,  {Geometric Aspects of Polynomial Interpolation in More Variables and of Waring's Problem},  European Congress of Mathematics, Vol 1 (Barcelona 2000), 289--316, Progr. Math., 201, Birkh\"auser, Basel, 2001.

\bibitem{CO} P.~Comon, G.~Ottaviani, On the typical rank of real binary form,  Linear  Multilinear  Alg., to appear.

\bibitem{FH} R.~Fr\"oberg,  {An inequality for Hilbert series of graded algebras,} Math. Scand.  {\bf 56} (1985),  117--144.   

\bibitem{Land} J.~M.~Landsberg, Tensors: Geometry and Applications, Graduate Studies in Mathematics, {\bf 128}, (2012) AMS, Providence, RI, 439 pp. 

\bibitem{PSh} A.~Postnikov, B.~Shapiro,  Trees, parking functions, syzygies, and deformations of monomial ideals. Trans. Amer. Math. Soc. {\bf 356(8)} (2004),  3109--314. 

\bibitem{RSch} K.~Ranestad, F.~O.~Schreyer, {Varieties of sums of powers}, J. Reine Angew. Math. {\bf 525} (2000), 147--181. 



\end{thebibliography}
\end{document}